\theoremstyle{plain}
\newtheorem{prop}{Proposition}[section]
\newtheorem{thrm}[prop]{Theorem}
\newtheorem{prob}[prop]{Problem}
\newtheorem{lem}[prop]{Lemma}
\theoremstyle{definition}
\newtheorem{defi}[prop]{Definition}
\newtheorem{exam}[prop]{Example}
\newtheorem*{nota}{Notation}
\newcommand\ie{\emph{i.e.}}
\newcommand\inv{^{-1}}
\newcommand\kbr[1]{\left<#1\right>}
\newcommand\NN{\mathbb{N}}
\newcommand\ZZ{\mathbb{Z}}
\newcommand\ti{\tikz[baseline,x=18,y=18]{\begin{scope}[shift={(0,0.18)}]
\draw[line width=1.2](-0.2,-0.2) .. controls (-0.1,0) .. (-0.2,0.2);
\draw[line width=1.2](0.2,-0.2) .. controls (0.1,0) .. (0.2,0.2);
\end{scope}}\xspace}
\newcommand\toneinv{\tikz[baseline,x=18,y=18]{\begin{scope}[shift={(0,0.18)}]
\draw[line width=1.2](-0.2,0.2) -- (0.2,-0.2);
\draw[line width=3.6,color=white](-0.2,-0.2) -- (0.2,0.2);
\draw[line width=1.2](-0.2,-0.2) -- (0.2,0.2);
\end{scope}}\xspace}
\newcommand\tz{\tikz[baseline,x=18,y=18]{\begin{scope}[shift={(0,0.18)}]
\draw[line width=1.2](-0.2,-0.2) .. controls (0,-0.1) .. (0.2,-0.2);
\draw[line width=1.2](-0.2,0.2) .. controls (0,0.1) .. (0.2,0.2);
\end{scope}}\xspace}
\newcommand\unknot{\tikz[baseline,x=18,y=18]{\begin{scope}[shift={(0,0.18)}]
\draw [ line width=1.2] (0,0) circle(0.2);\end{scope}}}
\DeclareMathOperator\br{br}
\DeclareMathOperator\den{den}
\DeclareMathOperator\num{num}
\DeclareMathOperator\lt{lt}
\DeclareMathOperator\wri{wr}
\title{A remarkable 20-crossing tangle}
\author{Shalom Eliahou}
\author{Jean Fromentin}
\keywords{Jones polynomial, Kauffman bracket, algebraic tangle}
\subjclass[2010]{57M27}
\begin{document}

\begin{abstract}
For any positive integer $r$, we exhibit a nontrivial knot $K_r$ with $(20\cdot 
2^{r-1}+1)$ crossings whose Jones polynomial $V(K_r)$ is equal to $1$ 
modulo~$2^r$. Our construction rests on a certain $20$-crossing tangle 
$T_{20}$ which is undetectable by the Kauffman bracket polynomial pair mod $2$. 
\end{abstract}	

\maketitle

\section{Introduction}

In \cite{T},  M.\,B.\,Thistlethwaite gave two $2$--component links
and one $3$--component link which are nontrivial and yet have the same Jones polynomial 
as the corresponding unlink~$U^2$ and $U^3$, respectively.
These were the first known examples of nontrivial links undetectable 
by the Jones polynomial.
Shortly thereafter, it was shown in \cite{EKP} that, for any integer $k\geq 2$, 
there exist infinitely many nontrivial $k$--component links whose Jones polynomial
is equal to that of the $k$--component unlink~$U^k$. 
Yet the corresponding problem for $1$--component links, i.e. for knots, is widely open: 
\emph{does there exist a nontrivial knot $K$ whose Jones polynomial 
is equal to that of the unknot $U^1$, namely to 1?}

We shall consider here the following weaker problem, consisting in looking for nontrivial knots $K$ whose Jones polynomial is \emph{congruent modulo some integer} to that of the unknot $U^1$.

\begin{prob}
\label{Prob1}
Given any integer $m\geq 2$, does there exist a nontrivial knot $K$ whose 
Jones polynomial $V(K)$ satisfies $V(K)\equiv 1 \mod m$?
\end{prob}

Naturally, for any two Laurent polynomials $f$, $g$ in $\ZZ[t,t\inv]$, 
the notation $f\equiv g\mod m$ means that there exists an element
$h\in\ZZ[t,t\inv]$ such that $f-g=m\cdot h$. 
This is equivalent to require that, for each $i\in\ZZ$, 
the coefficients $\alpha_i$ and $\beta_i$ of $t^i$ in $f$ and $g$, 
respectively, are congruent modulo $m$ as integers.

A result of M.\,B.\,Thistlethwaite \cite{MR899051} states that, for an alternating knot $K$ with $n$ crossings, 
the span of $V(K)$ is exactly $n$ and the coefficients of the terms of maximal and minimal degree in $V(K)$ are both $\pm1$. 
In particular, for any $m\geq 2$, \emph{there is no alternating knot with trivial Jones polynomial modulo~$m$}. 

Using the \emph{Mathematica} package \texttt{KnotTheory} of the KnotAtlas 
project \cite{knotatlas}, it is easy to find  knots which are 
solutions of Problem~\ref{Prob1} for the moduli $m=2,3$ and~$4$. 
For $m=5$ there is no solution of Problem~\ref{Prob1}
among the knots up to $16$ crossings. 
The following table gives the number of solutions of Problem~\ref{Prob1} 
for the moduli $2$ to $5$ up to $16$ crossings, respectively: 
$$
\begin{array}{|c||c|c|c|c|c|c|}
\hline
m &\leq11 & 12 & 13 & 14 & 15 & 16\\
\hline
2 & 0 & 4 & 9 & 35 & 140 & 582 \\
3 & 0 & 1 & 0 & 1 & 2 & 26  \\
4 & 0 & 0 & 0 & 0 & 1 & 0 \\
5 & 0 & 0 & 0 & 0 & 0 & 0\\
\hline
\end{array}
$$

In this note, we shall solve Problem~\ref{Prob1} for all moduli $m$ which are 
powers of~$2$. 
That is, given any integer $r\geq 1$, we shall construct infinitely many 
knots whose Jones polynomial is trivial mod $m=2^r$. 
Our construction rests on a certain $20$-crossing tangle $T_{20}$ whose Kauffman
bracket polynomial pair is trivial mod $2$.

The paper is structured as follows.
Section~\ref{S:basic} is devoted to basic tangle operations.
In Section~\ref{S:lickorish} we describe our tangle $T_{20}$ and we construct a family of knots~$K_r$ from a tangle 
$M_r$ which is composed of~$2^r$ copies of the tangle $T_{20}$.
In Section~\ref{S:bracket}, we compute the Kauffman bracket pair of the tangle $M_r$ modulo $2^r$.
In Section~\ref{S:jones}, we prove that the knots~$K_r$ for $r\geq 1$ are 
distinct and that the Jones polynomial of~$K_r$ is trivial modulo $2^r$. 
The paper concludes with a few related open questions.

\begin{center}
  \begin{figure}[h!]
    \begin{tikzpicture}[x=0.05cm,y=0.05cm]
	\draw(-5,10) node{\textbf{a.}};
	\coordinate (O1) at (6,10);
	\draw (O1) circle (6);
	\draw (O1) node{\large $T_1$};
	\coordinate (O2) at (26,10);
	\draw (O2) circle (6);
	\draw (O2) node{\large $T_2$};
	\draw[line width=2] (O1) ++ (45:6) -- ++ (45:4) coordinate (NE);
	\draw[line width=2] (O2) ++ (135:6) -- ++ (135:4) coordinate (NW);
	\draw[line width=2] (O1) ++ (315:6) -- ++ (315:4) coordinate (SE);
	\draw[line width=2] (O2) ++ (225:6) -- ++ (225:4) coordinate (SW);
	\draw[line width=2] (O1) ++ (135:6) -- ++ (135:4);
	\draw[line width=2] (O1) ++ (225:6) -- ++ (225:4);
	\draw[line width=2] (O2) ++ (45:6) -- ++ (45:4);
	\draw[line width=2] (O2) ++ (315:6) -- ++ (315:4);
	\coordinate (C1) at (intersection of O1--NE and O2--NW);
	\draw[line width=2] (NE) .. controls ($(NE)!0.3!(C1)$) and 
($(C1)!0.3!(NW)$) .. (NW);
	\coordinate (C2) at (intersection of O1--SE and O2--SW);
	\draw[line width=2] (SE) .. controls ($(SE)!0.3!(C2)$) and 
($(C2)!0.3!(SW)$) .. (SW);

      \begin{scope}[shift={(55,-10)}]
	\draw(-7,20) node{\textbf{b.}};
	\coordinate (O1) at (6,10);
	\draw (O1) circle (6);
	\draw (O1) node{\large $T_2$};
	\coordinate (O2) at (6,30);
	\draw (O2) circle (6);
	\draw (O2) node{\large $T_1$};
	\draw[line width=2] (O1) ++ (45:6) -- ++ (45:4) coordinate (NE);
	\draw[line width=2] (O1) ++ (135:6) -- ++ (135:4) coordinate (NW);
	\draw[line width=2] (O2) ++ (225:6) -- ++ (225:4) coordinate (SW);
	\draw[line width=2] (O2) ++ (315:6) -- ++ (315:4) coordinate (SE);
	\draw[line width=2] (O2) ++ (45:6) -- ++ (45:4);
	\draw[line width=2] (O2) ++ (135:6) -- ++ (135:4);
	\draw[line width=2] (O1) ++ (225:6) -- ++ (225:4);
	\draw[line width=2] (O1) ++ (315:6) -- ++ (315:4);
	\coordinate (C1) at (intersection of O1--NW and O2--SW);
	\draw[line width=2] (NW) .. controls ($(NW)!0.3!(C1)$) and 
($(C1)!0.3!(SW)$) .. (SW);
	\coordinate (C2) at (intersection of O1--NE and O2--SE);
	\draw[line width=2] (NE) .. controls ($(NE)!0.3!(C2)$) and 
($(C2)!0.3!(SE)$) .. (SE);
      \end{scope}
      \begin{scope}[shift={(90,0)}]
	\draw(-10,0) node{\textbf{d.}};
	\draw[line width=2] (-6,6) .. controls (0,6) and (0,-6) .. (6,-6);
	\draw[line width=6,color =white] (-6,-6) .. controls (0,-6) and (0,6) 
.. (6,6);
	\draw[line width=2](-6,-6) .. controls (0,-6) and (0,6) .. (6,6);
      \end{scope}
      \begin{scope}[shift={(90,20)}]
	\draw(-10,0) node{\textbf{c.}};
	\draw[line width=2](-6,-6) .. controls (0,-6) and (0,6) .. (6,6);
	\draw[line width=6,color =white] (-6,6) .. controls (0,6) and (0,-6) .. 
(6,-6);
	\draw[line width=2] (-6,6) .. controls (0,6) and (0,-6) .. (6,-6);
      \end{scope}
      \begin{scope}[shift={(125,9)}]
        \draw(-17,0) node{\textbf{e.}};
        \draw(0,0) coordinate (O) circle (6);
        \draw(O) node{$T$};	
        \draw[line width=2] (45:6) .. controls (45:20) and (315:20) .. (315:6);
        \draw[line width=2] (135:6) .. controls (135:20) and (225:20) .. 
(225:6);
      \end{scope}
      \begin{scope}[shift={(160,9)}]
        \draw(-12,0) node{\textbf{f.}};
        \draw(0,0) coordinate (O) circle (6);
        \draw(O) node{$T$};	
        \draw[line width=2] (135:6) .. controls (135:20) and (45:20) .. (45:6);
        \draw[line width=2] (225:6) .. controls (225:20) and (315:20) .. 
(315:6);
      \end{scope}
    \end{tikzpicture}
    \caption{
      \textbf{a.}~Tangle  $T_1+T_2$. 
      \textbf{b.}~Tangle $T_1\ast T_2$.
      \textbf{c.}~Tangle $1$.
      \textbf{d.}~Tangle $-1$.
      \textbf{e.}~Closure $\den(T)$.
      \textbf{f.}~Closure $\num(T)$.
    }
    \label{F:Tangle}
  \end{figure}
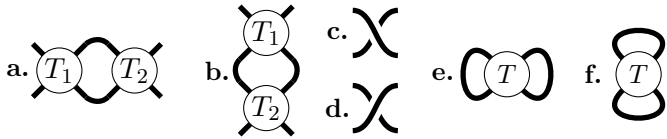
\end{center}

\section{Basic tangle operations}
\label{S:basic}

We shall use the same notation as in  \cite{EKP}. In particular, if $T_1$, $T_2$ are two tangles with $4$ endpoints, 
we denote by $T_1+T_2$ their \emph{horizontal sum} and by $T_1\ast T_2$ their \emph{vertical 
sum} (see \textbf{a} and \textbf{b} of Figure~\ref{F:Tangle}).
The tangle $1$ denotes a single crossing as in \textbf{c} of 
Figure~\ref{F:Tangle}, while the tangle~$-1$ denotes its opposite version as in 
\textbf{d} of Figure~\ref{F:Tangle}.
More generally, if $T$ is a tangle, then $-T$ denotes the tangle obtained from $T$ 
by switching the signs of all crossings in $T$.

As usual, for $k\in \NN\setminus\{0\}$ we define the tangles
\begin{align*}
k&=1+\cdots+1, &-k=(-1)+\cdots+(-1),\\
 1/k&=1\ast\cdots\ast1, &-1/k=(-1)\ast\cdots\ast(-1), 
 \end{align*}
with $k$ terms in each expression. These are particular cases of \emph{algebraic tangles}, namely tangles constructed recursively from the tangles $\pm 1$ using horizontal and vertical sums.

\begin{exam}
\label{E:Tangle1}
Let $T_{8,21}$ be the tangle $(((1/2)+1)\ast 2)+(-3)$. (See next example for the choice of this name.)
Diagrams corresponding to $T_{8,21}$ are
\begin{center}
\begin{tikzpicture}[x=0.05cm,y=0.05cm]
\draw[line width=2](0,30) .. controls (0,35) and (10,35) .. (10,40);
\draw[line width=6,color=white](0,40) .. controls (0,35) and (10,35) .. (10,30);
\draw[line width=2](0,40) .. controls (0,35) and (10,35) .. (10,30);
\draw[line width=2](0,20) .. controls (0,25) and (10,25) .. (10,30);
\draw[line width=6,color=white](0,30) .. controls (0,25) and (10,25) .. (10,20);
\draw[line width=2](0,30) .. controls (0,25) and (10,25) .. (10,20);
\draw[line width=2](20,25) ..controls (20,30) and (30,30) .. (30,35);
\draw[line width=6,color=white](20,35) ..controls (20,30) and (30,30) .. 
(30,25);
\draw[line width=2](20,35) ..controls (20,30) and (30,30) .. (30,25);
\draw[line width=2](5,0) .. controls (10,0) and (10,10) .. (15,10);
\draw[line width=6,color=white](5,10) .. controls (10,10) and (10,0) .. (15,0);
\draw[line width=2](5,10) .. controls (10,10) and (10,0) .. (15,0);
\draw[line width=2](15,0) .. controls (20,0) and (20,10) .. (25,10);
\draw[line width=6,color=white](15,10) .. controls (20,10) and (20,0) .. (25,0);
\draw[line width=2](15,10) .. controls (20,10) and (20,0) .. (25,0);
\draw[line width=2](45,25) .. controls (50,25) and (50,15) .. (55,15);
\draw[line width=6,color=white](45,15) .. controls (50,15) and (50,25) .. 
(55,25);
\draw[line width=2](45,15) .. controls (50,15) and (50,25) .. (55,25);
\draw[line width=2](55,25) .. controls (60,25) and (60,15) .. (65,15);
\draw[line width=6,color=white](55,15) .. controls (60,15) and (60,25) .. 
(65,25);
\draw[line width=2](55,15) .. controls (60,15) and (60,25) .. (65,25);
\draw[line width=2](65,25) .. controls (70,25) and (70,15) .. (75,15);
\draw[line width=6,color=white](65,15) .. controls (70,15) and (70,25) .. 
(75,25);
\draw[line width=2](65,15) .. controls (70,15) and (70,25) .. (75,25);
\draw[line width=2,color=lightgray](10,40) arc(180:0:5) -- (20,35);
\draw[line width=2,color=lightgray](10,20) arc(-180:0:5) -- (20,25);
\draw[line width=2,color=lightgray](0,20) -- (0,15) arc (180:270:5);
\draw[line width=2,color=lightgray] (25,10) arc (-90:0:5) -- (30,25);
\draw[line width=2,color=lightgray] (25,0) -- (35,0)  arc (-90:0:5) -- (40,10) 
arc(180:90:5);
and (40,25) .. (45,25);
\draw[line width=2,color=lightgray] (30,35) arc (180:0:5) -- (40,30) arc 
(180:270:5);
\draw[line width=2,color=lightgray](-5,0) -- (5,0);
\draw[line width=2,color=lightgray](0,40) arc (0:90:5);
\draw[line width=2,color=lightgray](75,25) arc (-90:0:5) -- (80,40) arc 
(180:90:5);
\draw[line width=2,color=lightgray](75,15) arc (90:0:5) -- (80,5) arc 
(180:270:5);
\filldraw[color=black](-5,0) circle(1);
\filldraw[color=black](-5,45) circle(1);
\filldraw[color=black](85,0) circle(1);
\filldraw[color=black](85,45) circle(1);
\begin{scope}[shift={(110,-3)}]
\draw[line width=2](-2,40) -- (5,40) arc(90:45:5) 
coordinate (A0);
\draw[line width=2] (A0) -- ++ (-45:2) arc (45:0:6) coordinate (A1);
\draw[line width=2] (A1) arc(0:-45:6) arc (135:180:8) coordinate (A2);
\draw[line width=2] (A2) arc(180:270:8) coordinate(A3);
\draw[line width=2] (A3) arc(270:395:8) coordinate (A4);
\draw[line width=2] (A4)-- ++ (125:10) coordinate (A5);
\draw[line width=2] (A5) arc(45:180:5) coordinate (A6);
\draw[line width=6,color=white] (A0) -- ++ (-45:2) arc (45:0:6);
\draw[line width=2] (A0) -- ++ (-45:2) arc (45:0:6);
\draw[line width=6,color=white] (A6) arc(180:270:5) coordinate(A7);
\draw[line width=2] (A6) arc(180:270:5) coordinate(A7);
\draw[line width=2] (A7) arc(270:315:5) --++ (45:8) coordinate(A8);
\draw[line width=6,color=white] (A4)-- ++ (125:10);
\draw[line width=2] (A4)-- ++ (125:10);
\draw[line width=2] (A8) arc (135:45:5) -- ++ (-60:5) coordinate(A9);
\draw[line width=2] (A9) -- ++ (-60:6) arc(-150:-90:4.5) coordinate (A10);
\draw[line width=2] (A10) arc(-90:-35:4.5)-- ++ (55:2) 
arc(-225:-270:4) coordinate (A11);
\draw[line width=2] (A11)  arc(-270:-330:4) -- ++ (-60:13.5) arc (224:270:5) -- 
(65,12) ;

\draw[line width=2](-2,12) -- (2.2,12) arc (270:315:5) -- ++(45:5) 
arc(135:90:8) coordinate(B1);
\draw[line width=6,color=white] (A2) arc(180:270:8);
\draw[line width=2] (A2) arc(180:270:8);
\draw[line width=6,color=white] (B1) arc(90:45:8) -- ++ (-45:2) arc (225:270:5) 
coordinate(B2);
\draw[line width=2] (B1) arc(90:45:8) -- ++ (-45:2) arc (225:270:5) 
coordinate(B2);
\draw[line width=6,color=white] (B2) arc(270:315:5) -- 
++ (60:11) arc (150:90:4.5);
\draw[line width=2] (B2) arc(270:315:5) -- 
++ (60:11) arc (150:90:4.5) coordinate(B3);
\draw[line width=2] (B3) arc(90:35:4.5) -- ++ (-55:2) arc 
(225:270:4) coordinate (B4);
\draw[line width=6,color=white] (B4) arc(270:330:4) -- ++ (60:16) arc 
(135:90:5) -- (65,40);
\draw[line width=2] (B4) arc(270:330:4) -- ++ (60:16) arc (135:90:5) -- (65,40);
\draw[line width=6,color=white] (A10) arc(-90:-35:4.5)-- ++ (55:2) 
arc(-225:-270:4) coordinate (A11);
\draw[line width=2] (A10) arc(-90:-35:4.5)-- ++ (55:2) 
arc(-225:-270:4) coordinate (A11);
\end{scope}
\end{tikzpicture}

\end{center}
The parts in black correspond to the tangles $1/2$, $1$, $2$ and $-3$, respectively, while the gray 
strands depict the connections between them.
The four marked points represent the extremities of the global tangle.
The rightmost diagram is a ``smoother'' representation of the tangle~$T_{8,21}$.
\end{exam}

If $T$ is a tangle, we denote by $\den(T)$ and $\num(T)$ the link diagrams obtained by 
gluing the extremities of $T$ as in \textbf{e} and \textbf{f} of 
Figure~\ref{F:Tangle}, respectively.

\begin{exam}
\label{E:Tangle2}
Consider again the tangle $T_{8,21}$ of Example~\ref{E:Tangle1}. Here are 
$\num(T_{8,21})$ and an isotopic diagram.
\begin{center}
\begin{tikzpicture}[x=0.05cm,y=0.05cm]
\draw[line width=2](0,30) .. controls (0,35) and (8,35) .. (10,40);
\draw[line width=6,color=white](0,40) .. controls (0,35) and (10,35) .. (10,30);
\draw[line width=2](0,40) .. controls (0,35) and (10,35) .. (10,30);
\draw[line width=2](0,20) .. controls (0,25) and (10,25) .. (10,30);
\draw[line width=6,color=white](0,30) .. controls (0,25) and (8,25) .. (10,20);
\draw[line width=2](0,30) .. controls (0,25) and (8,25) .. (10,20);
\draw[line width=2](20,25) ..controls (21,30) and (29,30) .. (30,35);
\draw[line width=6,color=white](20,35) ..controls (21,30) and (30,30) .. 
(30,25);
\draw[line width=2](20,35) ..controls (21,30) and (30,30) .. (30,25);
\draw[line width=2](5,0) .. controls (10,1) and (10,10) .. (15,10);
\draw[line width=6,color=white](5,10) .. controls (10,10) and (10,0) .. (15,0);
\draw[line width=2](5,10) .. controls (10,10) and (10,0) .. (15,0);
\draw[line width=2](15,0) .. controls (20,0) and (20,10) .. (25,10);
\draw[line width=6,color=white](15,10) .. controls (20,10) and (20,0) .. (25,0);
\draw[line width=2](15,10) .. controls (20,10) and (20,0) .. (25,0);
\draw[line width=2](45,25) .. controls (50,25) and (50,15) .. (55,15);
\draw[line width=6,color=white](45,15) .. controls (50,15) and (50,25) .. 
(55,25);
\draw[line width=2](45,15) .. controls (50,15) and (50,25) .. (55,25);
\draw[line width=2](55,25) .. controls (60,25) and (60,15) .. (65,15);
\draw[line width=6,color=white](55,15) .. controls (60,15) and (60,25) .. 
(65,25);
\draw[line width=2](55,15) .. controls (60,15) and (60,25) .. (65,25);
\draw[line width=2](65,25) .. controls (70,25) and (70,15) .. (75,15);
\draw[line width=6,color=white](65,15) .. controls (70,15) and (70,25) .. 
(75,25);
\draw[line width=2](65,15) .. controls (70,15) and (70,25) .. (75,25);
\draw[line width=2](10,40) .. controls (15,48) and (19,42) .. (20,35);
\draw[line width=2](10,20) .. controls (15,12) and (19,18) .. (20,25);
\draw[line width=2] (0,20) .. controls (0,15) and (0,10) .. (5,10);
\draw[line width=2] (25,10) .. controls (30,10) and (30,15) .. (30,25);
\draw[line width=2] (25,0) .. controls (35,0) and (35,15) .. (45,15);
\draw[line width=2] (30,35) .. controls (30,42) and (38,42) .. (38,35) .. 
controls (38,30) and (40,25) .. (45,25);
\draw[line width=2] (0,40) .. controls (0,50) and (35,50) .. (45,50) .. 
controls (55,50) and (85,50) .. (85,40) .. controls (85,30) and (85,25) .. 
(75,25);
\draw[line width=2](5,0) .. controls (1,-2)  and (0,-8) .. (10,-8) .. controls 
(35,-10) ..(45,-10) ..controls (55,-10) and (85,-10) .. (85,0) .. controls 
(85,10) and (85,15) .. (75,15); 
\draw[line width=1,color=lightgray](-2,18) -- (12,18) -- (12,42) -- (-2,42) -- 
(-2,18) ;
\draw[line width=1,color=lightgray](18,23) -- (32,23) -- (32,37) -- (18,37) -- 
(18,23);
\draw(92,20) node{$\approx$};
\begin{scope}[shift={(115,-10)}]
\draw[line width=2](5,25) .. controls (10,25) and (10,15) .. (15,15);
\draw[line width=6,color=white](5,15) .. controls (10,15) and (10,25) .. 
(15,25);
\draw[line width=2](5,15) .. controls (10,15) and (10,25) .. (15,25);
\draw[line width=2](15,25) .. controls (20,25) and (20,15) .. (25,15);
\draw[line width=6,color=white](15,15) .. controls (20,15) and (20,25) .. 
(25,25);
\draw[line width=2](15,15) .. controls (20,15) and (20,25) .. (25,25);
\draw[line width=2](25,25) -- (45,25);
\draw[line width=2](45,25) .. controls (50,25) and (50,15) .. (55,15);
\draw[line width=6,color=white](45,15) .. controls (50,15) and (50,25) .. 
(55,25);
\draw[line width=2](45,15) .. controls (50,15) and (50,25) .. (55,25);

\draw[line width=2](25,5) .. controls (30,5) and (30,15) .. (35,15);
\draw[line width=6,color=white](25,15) .. controls (30,15) and (30,5) .. (35,5);
\draw[line width=2](25,15) .. controls (30,15) and (30,5) .. (35,5);
\draw[line width=2](35,5) .. controls (40,5) and (40,15) .. (45,15);
\draw[line width=6,color=white](35,15) .. controls (40,15) and (40,5) .. (45,5);
\draw[line width=2](35,15) .. controls (40,15) and (40,5) .. (45,5);

\draw[line width=2](55,15) .. controls (60,15) and (60,5) .. (65,5);
\draw[line width=6,color=white](55,5) .. controls (60,5) and (60,15) .. (65,15);
\draw[line width=2](55,5) .. controls (60,5) and (60,15) .. (65,15);
\draw[line width=2](65,15) .. controls (70,15) and (70,5) .. (75,5);
\draw[line width=6,color=white](65,5) .. controls (70,5) and (70,15) .. (75,15);
\draw[line width=2](65,5) .. controls (70,5) and (70,15) .. (75,15);
\draw[line width=2](75,15) .. controls (80,15) and (80,5) .. (85,5);
\draw[line width=6,color=white](75,5) .. controls (80,5) and (80,15) .. (85,15);
\draw[line width=2](75,5) .. controls (80,5) and (80,15) .. (85,15);

\draw[line width=2](45,5) -- (55,5);
\draw[line width=2](5,5) -- (25,5);
\draw[line width=2](55,25) -- (85,25);
\draw[line width=2](5,25) .. controls (0,25) and (0,35) .. (5,35) -- (85,35) .. 
controls (90,35) and (90,25) .. (85,25);
\draw[line width=2](5,15) .. controls (-10,15) and (-10,45) .. (5,45) -- 
(85,45) .. controls (100,45) and (100,15) .. (85,15);
\draw[line width=2](5,5) .. controls (-20,5) and (-20,55) .. (5,55) -- (85,55) 
.. controls (110,55) and (110,5) .. (85,5);
\draw[line width=1,color=lightgray](3,13) -- (27,13) -- (27,27) -- (3,27) -- 
(3,13); 
\draw[line width=1,color=lightgray](43,13) -- (57,13) -- (57,27) -- (43,27) -- 
(43,13); 

\end{scope}
 \end{tikzpicture}
 \end{center}
The isotopy is obtained from the left diagram by rotating counterclockwise 
the left block in gray and clockwise the right one. 
Looking at Dale Rolfen's knot table~\cite{Rolfsen}, we remark that 
$\num(T_{8,21})$ is a diagram of $K_{8,21}$ which is the $21$\textsuperscript{st} prime knot with $8$ crossings.
\end{exam}

\medskip
\section{The main construction}
\label{S:lickorish}

%

We now introduce a family of tangles which includes the tangle $T_{20}$, the cornerstone
of our solution to Problem~\ref{Prob1} for moduli $m$ which are powers of $2$.

\begin{defi}\label{D:Tangles}
We define the tangles $T_{10}$, $T_{20}$ and $M_r$ for $r \ge 1$ as follows: 

-- $i)$ $T_{10}=T_{8,21}\ast 2=((((1/2)+1)\ast2)+(-3))\ast2$;

-- $ii)$ $T_{20}=T_{10}+(-T_{10})$;

-- $iii)$ $M_1=T_{20}$ and 
$M_r=M_{r-1}+M_{r-1}$ for $r\geq 2$.
 \end{defi}

 Tangle $M_1$ is depicted in Figure~\ref{F:M1}.
 
 \begin{figure}[h!]
 \begin{center}
\begin{tikzpicture}[x=0.05cm,y=0.05cm]
\draw[line width=2](-2,40) -- (5,40) arc(90:45:5) 
coordinate (A0);
\draw[line width=2] (A0) -- ++ (-45:2) arc (45:0:6) coordinate (A1);
\draw[line width=2] (A1) arc(0:-45:6) arc (135:180:8) coordinate (A2);
\draw[line width=2] (A2) arc(180:270:8) coordinate(A3);
\draw[line width=2] (A3) arc(270:395:8) coordinate (A4);
\draw[line width=2] (A4)-- ++ (125:10) coordinate (A5);
\draw[line width=2] (A5) arc(45:180:5) coordinate (A6);
\draw[line width=6,color=white] (A0) -- ++ (-45:2) arc (45:0:6);
\draw[line width=2] (A0) -- ++ (-45:2) arc (45:0:6);
\draw[line width=6,color=white] (A6) arc(180:270:5) coordinate(A7);
\draw[line width=2] (A6) arc(180:270:5) coordinate(A7);
\draw[line width=2] (A7) arc(270:315:5) --++ (45:8) coordinate(A8);
\draw[line width=6,color=white] (A4)-- ++ (125:10);
\draw[line width=2] (A4)-- ++ (125:10);
\draw[line width=2] (A8) arc (135:45:5) -- ++ (-60:5) coordinate(A9);
\draw[line width=2] (A9) -- ++ (-60:6) arc(-150:-90:4.5) coordinate (A10);
\draw[line width=2] (A10) arc(-90:-35:4.5)-- ++ (55:2) arc(-225:-270:4) 
coordinate (A11);
\draw[line width=2] (A11)  arc(90:-90:10) coordinate (C1);
\draw (C1) arc (90:135:8) -- ++ (225:2) arc (-45:-90:8) coordinate (C2);
\draw[line width=2] (C2) arc(-90:-135:8) -- ++(135:2) arc (45:90:8) coordinate 
(C3);
\draw[line width=2] (C3) -- ++ (-5,0) arc(270:90:6) coordinate (B1);

\draw[line width=6,color=white] (A2) arc(180:270:8);
\draw[line width=2] (A2) arc(180:270:8);
\draw[line width=6,color=white] (B1) arc(90:45:8) -- ++ (-45:2) arc (225:270:5) 
coordinate(B2);
\draw[line width=2] (B1) arc(90:45:8) -- ++ (-45:2) arc (225:270:5) 
coordinate(B2);
\draw[line width=6,color=white] (B2) arc(270:315:5) -- 
++ (60:11) arc (150:90:4.5);
\draw[line width=2] (B2) arc(270:315:5) -- 
++ (60:11) arc (150:90:4.5) coordinate(B3);
\draw[line width=2] (B3) arc(90:35:4.5) -- ++ (-55:2) arc 
(225:270:4) coordinate (B4);
\draw[line width=6,color=white] (B4) arc(270:330:4) -- ++ (60:16);
\draw[line width=2] (B4) arc(270:330:4) -- ++ (60:16) arc (135:90:5)  coordinate (IA);
\draw[line width=2] (-2,0) -- ++ (22,0) arc (-90:-45:8) -- ++ (45:2) 
arc(135:90:8) coordinate (D);
\draw[line width=2] (D) arc(90:45:8)  -- ++ (-45:2) arc (-135:-90:8) 
-- ++ (18,0);
\draw[line width=2] (C1) arc (90:135:8) -- ++ (225:2) arc (-45:-90:8);

\draw[line width=6,color=white] (D) arc(90:45:8)  -- ++ (-45:2);
\draw[line width=2] (D) arc(90:45:8)  -- ++ (-45:2);
\draw[line width=6,color=white] (C2) arc(-90:-135:8) -- ++(135:2) arc 
(45:90:8);
\draw[line width=2] (C2) arc(-90:-135:8) -- ++(135:2) arc 
(45:90:8);
\draw[line width=6,color=white] (A10) arc(-90:-35:4.5)-- ++ (55:2) 
arc(-225:-270:4) coordinate (A11);
\draw[line width=2,color=black] (A10) arc(-90:-35:4.5)-- ++ (55:2) 
arc(-225:-270:4) coordinate (A11);
\begin{scope}[shift={(60,0)}]
 \draw[line width=2](5,40) coordinate (JA) arc(90:45:5) 
coordinate (A0);
\draw[line width=2] (A0) -- ++ (-45:2) arc (45:0:6) coordinate (A1);
\draw[line width=2] (A1) arc(0:-45:6) arc (135:180:8) coordinate (A2);
\draw[line width=2] (A2) arc(180:270:8) coordinate(A3);
\draw[line width=2] (A3) arc(270:395:8) coordinate (A4);
\draw[line width=2] (A4)-- ++ (125:10) coordinate (A5);
\draw[line width=2] (A5) arc(45:180:5) coordinate (A6);
\draw[line width=2] (A6) arc(180:270:5) coordinate(A7);
\draw[line width=2] (A7) arc(270:315:5) --++ (45:8) coordinate(A8);
\draw[line width=2] (A4)-- ++ (125:10);
\draw[line width=2] (A8) arc (135:45:5) -- ++ (-60:5) coordinate(A9);
\draw[line width=2] (A9) -- ++ (-60:6) arc(-150:-90:4.5) coordinate (A10);
\draw[line width=2] (A10) arc(-90:-35:4.5)-- ++ (55:2) arc(-225:-270:4) 
coordinate (A11);
\draw[line width=2] (A11)  arc(90:-90:10) coordinate (C1);
\draw (C1) arc (90:135:8) -- ++ (225:2) arc (-45:-90:8) coordinate (C2);
\draw[line width=2] (C2) arc(-90:-135:8) -- ++(135:2) arc (45:90:8) coordinate 
(C3);
\draw[line width=2] (C3) -- ++ (-5,0) arc(270:90:6) coordinate (B1);
\draw[line width=2] (B1) arc(90:45:8) -- ++ (-45:2) arc (225:270:5) 
coordinate(B2);
\draw[line width=2] (B2) arc(270:315:5) -- 
++ (60:11) arc (150:90:4.5) coordinate(B3);
\draw[line width=2] (B3) arc(90:35:4.5) -- ++ (-55:2) arc 
(225:270:4) coordinate (B4);
\draw[line width=2] (B4) arc(270:330:4) -- ++ (60:16) arc (135:90:4) 
-- ++ (5,0);
\draw[line width=2] (-2,0) -- ++ (22,0) arc (-90:-45:8) -- ++ (45:2) 
arc(135:90:8) coordinate (D);
\draw[line width=2] (D) arc(90:45:8)  -- ++ (-45:2) arc (-135:-90:8) 
-- ++ (15,0);
\draw[line width=2] (C1) arc (90:135:8) -- ++ (225:2) arc (-45:-90:8);

\draw[line width=6,color=white] (A1) arc(0:-45:6) arc (135:180:8) coordinate (A2);
\draw[line width=2] (A1) arc(0:-45:6) arc (135:180:8) coordinate (A2);
\draw[line width=6,color=white] (A3) arc(270:395:8) coordinate (A4);
\draw[line width=2] (A3) arc(270:395:8) coordinate (A4);
\draw[line width=6,color=white] (A5) arc(45:180:5) coordinate (A6);
\draw[line width=2] (A5) arc(45:180:5) coordinate (A6);
\draw[line width=6,color=white] (A7) arc(270:315:5) --++ (45:8) coordinate(A8);
\draw[line width=2] (A7) arc(270:315:5) --++ (45:8) coordinate(A8);
\draw[line width=6,color=white] (A9) -- ++ (-60:6) arc(-150:-90:4.5) coordinate (A10);
\draw[line width=2] (A9) -- ++ (-60:6) arc(-150:-90:4.5) coordinate (A10);
\draw[line width=6,color=white] (A11)  arc(90:-90:10) coordinate (C1);
\draw[line width=2] (A11)  arc(90:-90:10) coordinate (C1);
\draw[line width=6,color=white] (C1) arc (90:135:8) -- ++ (225:2) arc (-45:-90:8) coordinate (C2);
\draw[line width=2] (C1) arc (90:135:8) -- ++ (225:2) arc (-45:-90:8) coordinate (C2);
\draw[line width=6,color=white] (C3) -- ++ (-5,0) arc(270:90:6) coordinate (B1);
\draw[line width=2] (C3) -- ++ (-5,0) arc(270:90:6) coordinate (B1);
\draw[line width=6,color=white] (B3) arc(90:35:4.5) -- ++ (-55:2) arc 
(225:270:4) coordinate (B4);
\draw[line width=2] (B3) arc(90:35:4.5) -- ++ (-55:2) arc 
(225:270:4) coordinate (B4);
\draw[line width=6,color=white] (-2,0) -- ++ (22,0) arc (-90:-45:8) -- ++ (45:2) 
arc(135:90:8) coordinate (D);
\draw[line width=2] (-2,0) -- ++ (22,0) arc (-90:-45:8) -- ++ (45:2) 
arc(135:90:8) coordinate (D);

\end{scope}
\draw[line width=2] (IA) -- (JA);

\end{tikzpicture}
\end{center}
\caption{The key tangle $M_1=T_{20}$.}
\label{F:M1}
\end{figure}
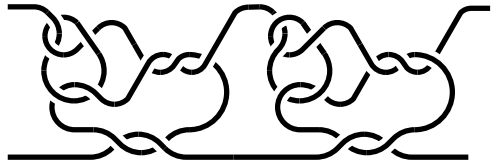

\begin{defi}
For $r\in\NN\setminus\{0\}$, we define $K_r$ to be the knot represented by the diagram~$\den(1\ast M_r)$.
\end{defi}

By definition, the tangle $M_r$ has $2^{r-1}\times 20$ crossings.
So the knot $K_r$ has at most $1+2^{r-1}\times 20$ crossings.
We observe that the tangle $M_{r}$ is the union of two arcs, 
the first one going from NW to NE and the second one from SW to SE.
As illustrated by the diagram
\begin{center}
 \begin{tikzpicture}[x=0.05cm,y=0.05cm]
 \draw (10,0) coordinate (O) circle (8) node {$M_r$};
 \draw (O) ++ (45:8) coordinate (NE);
 \draw (O) ++ (135:8) coordinate (NW);
 \draw (O) ++ (225:8) coordinate (SW);
 \draw (O) ++ (315:8) coordinate (SE);
 \draw[line width=2] (NE) arc (135:90:6) -- ++ (5,0) arc (-90:90:6) --
++ (-8.5,0) coordinate (B1);
 \draw[line width=2,dashed] (SE) arc (225:270:6) -- ++ (5,0) arc (-90:90:20) 
-- ++ (-8.5,0) coordinate (A1);
\draw (A1) ++ (-6.5,0) coordinate (A2);
\draw (A1) ++ (-6.5,-13) coordinate (A3);
\draw (A1) ++ (-13,-13) coordinate (A4);
\draw[line width=2,color=lightgray] (A1) .. controls (A2) and (A3) .. (A4);
\draw (B1) ++ (-6.5,0) coordinate (B2);
\draw (B1) ++ (-6.5,13) coordinate (B3);
\draw (B1) ++ (-13,13) coordinate (B4);
\draw[line width=6,color=white] (B1) .. controls (B2) and (B3) .. (B4);
\draw[line width=2,color=lightgray] (B1) .. controls (B2) and (B3) .. (B4);
 \draw[line width=2] (NW) arc (45:90:6) -- ++ (-5,0) arc (270:90:6)  -- (A4);
 \draw[line width=2,dashed] (SW) arc (-45:-90:6) -- ++ (-5,0) arc (270:90:20) 
-- (B4);
 \end{tikzpicture}
\end{center}
the link diagram $\den(1\ast M_r)$ has exactly one component: if we travel along the dotted 
arc we must meet the undotted one. 
The following proposition summarizes these remarks.

\begin{prop}
For each  $r\geq 1$, the link $K_r$ is  a knot with at most
$1+2^{r-1}\times 20$ crossings.
\end{prop}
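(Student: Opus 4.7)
My plan has two parts. First, for the crossing bound: a straightforward induction on $r$ shows that $M_r$ has $2^{r-1}\cdot 20$ crossings, since $T_{10}=T_{8,21}\ast 2$ has $10$ crossings and $T_{20}=T_{10}+(-T_{10})$ has $20$. Adding the single crossing contributed by the tangle $1$ in $1\ast M_r$, and noting that $\den$ adds no crossings, yields the bound $1+2^{r-1}\cdot 20$.

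Second, to prove $K_r$ is a knot I show that $\den(1\ast M_r)$ has exactly one component. The key intermediate claim is that $M_r$ is a union of two disjoint arcs, one joining its NW and NE endpoints and the other joining its SW and SE endpoints; I call such a tangle of \emph{horizontal type}, as opposed to \emph{vertical type} (NW joined to SW and NE to SE) or \emph{twist type} (NW to SE and NE to SW). I establish this by induction on $r$. The inductive step reduces to the observation that the horizontal sum of two horizontal-type tangles is again horizontal-type: using the gluings of the horizontal sum which identify the NE of the left summand with the NW of the right, and the SE of the left with the SW of the right, a direct trace confirms this.

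For the base case $r=1$ it suffices to verify that $T_{20}$ is of horizontal type. The tangle $2=1+1$ is easily seen to be horizontal by tracing through its two twist-type crossings. The tangle $T_{8,21}$ is of vertical type, which can be checked either from the diagram in Example~\ref{E:Tangle1} or deduced from Example~\ref{E:Tangle2}, since $\num(T_{8,21})=K_{8,21}$ is a knot, ruling out the horizontal type (which would give a two-component link under $\num$). Given these, a trace through the vertical sum $T_{8,21}\ast 2$ yields that $T_{10}$ is horizontal, and then a trace through $T_{10}+(-T_{10})$ gives that $T_{20}$ is horizontal, as required.

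Armed with the arc structure of $M_r$, I finish by tracing a single closed curve in $\den(1\ast M_r)$. The tangle $1$ is of twist type; its south endpoints are glued to the north endpoints of $M_r$ by the vertical sum; and $\den$ externally connects the two left endpoints and, separately, the two right endpoints. Starting at the outer NW corner, the trace runs from NW of $1$ across the crossing to SE of $1$, through the glue to NE of $M_r$, along its NW-NE arc to NW of $M_r$, back through the glue to SW of $1$, across the crossing to NE of $1$ (the outer NE), down the right closure arc to SE of $M_r$, along its SW-SE arc to SW of $M_r$, and finally up the left closure arc back to the start. This exhausts the diagram, proving $K_r$ is a knot. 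The only genuine bookkeeping is the arc-type check for $T_{8,21}$ in the base case; the rest is routine case analysis of how the three arc types compose under $+$ and $\ast$.
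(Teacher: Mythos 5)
Your overall strategy coincides with the paper's: the crossing count is immediate from the definitions, and the heart of the matter is that $M_r$ consists of two arcs joining NW to NE and SW to SE, after which a single trace around $\den(1\ast M_r)$ shows the diagram has one component. The paper simply asserts this arc structure of $M_r$ by inspection of a figure, whereas you derive it by induction using a classification into three arc types; that formalization is welcome, and your composition rules for $+$ and $\ast$ and your final trace are correct.

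There is, however, one false step in your base case: $T_{8,21}=(((1/2)+1)\ast 2)+(-3)$ is of \emph{twist} type, not vertical type. Indeed $(1/2)+1$ is vertical, hence $((1/2)+1)\ast 2$ is horizontal (arcs NW--NE and SW--SE), and taking the horizontal sum of a horizontal-type tangle with the twist-type tangle $-3$ yields a twist-type tangle (arcs NW--SE and NE--SW). The justification you offer cannot detect this: the fact that $\num(T_{8,21})$ is a knot only excludes the horizontal type, since both the vertical and the twist types give a one-component $\num$-closure. Fortunately the error is harmless downstream: placing either a vertical-type or a twist-type tangle on top of the horizontal tangle $2$ in a vertical sum produces a horizontal tangle, so $T_{10}=T_{8,21}\ast 2$ is horizontal as you claim, and the rest of the argument goes through unchanged. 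You should correct the type assigned to $T_{8,21}$ and redo that one trace accordingly.
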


For all $r\geq 1$, a mutant knot $K_r'$ is obtained from $K_r$ by replacing the tangle $-T_{10}$ in each summand $M_1$ of $M_r$ by its image under vertical symmetry.
The knot~$K_r$ and~$K_r'$, being mutant of each other, have the same Jones polynomial~\cite{MR975096}.
The knot~$K'_1$ is depicted on Figure~\ref{F:Knot}.

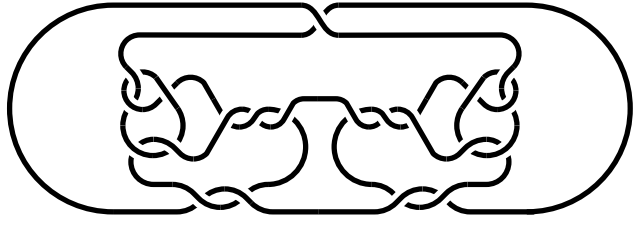
\begin{figure}
\begin{center}
 \begin{tikzpicture}[x=0.05cm,y=0.05cm]
\draw[line width=2,color=white] (5,40) arc(90:45:5) coordinate (A0);
\draw[line width=2] (A0) -- ++ (-45:2) arc (45:0:6) coordinate (A1);
\draw[line width=2] (A1) arc(0:-45:6) arc (135:180:8) coordinate (A2);
\draw[line width=2] (A2) arc(180:270:8) coordinate(A3);
\draw[line width=2] (A3) arc(270:395:8) coordinate (A4);
\draw[line width=2] (A4)-- ++ (125:10) coordinate (A5);
\draw[line width=2] (A5) arc(45:180:5) coordinate (A6);
\draw[line width=6,color=white] (A0) -- ++ (-45:2) arc (45:0:6);
\draw[line width=2] (A0) -- ++ (-45:2) arc (45:0:6);
\draw[line width=6,color=white] (A6) arc(180:270:5) coordinate(A7);
\draw[line width=2] (A6) arc(180:270:5) coordinate(A7);
\draw[line width=2] (A7) arc(270:315:5) --++ (45:8) coordinate(A8);
\draw[line width=6,color=white] (A4)-- ++ (125:10);
\draw[line width=2] (A4)-- ++ (125:10);
\draw[line width=2] (A8) arc (135:45:5) -- ++ (-60:5) coordinate(A9);
\draw[line width=2] (A9) -- ++ (-60:6) arc(-150:-90:4.5) coordinate (A10);
\draw[line width=2] (A10) arc(-90:-35:4.5)-- ++ (55:2) arc(-225:-270:4) coordinate (A11);
\draw[line width=2] (A11)  arc(90:-90:10) coordinate (C1);
\draw (C1) arc (90:135:8) -- ++ (225:2) arc (-45:-90:8) coordinate (C2);
\draw[line width=2] (C2) arc(-90:-135:8) -- ++(135:2) arc (45:90:8) coordinate (C3);
\draw[line width=2] (C3) -- ++ (-5,0) arc(270:90:6) coordinate (B1);
\draw[line width=6,color=white] (A2) arc(180:270:8);
\draw[line width=2] (A2) arc(180:270:8);
\draw[line width=6,color=white] (B1) arc(90:45:8) -- ++ (-45:2) arc (225:270:5) coordinate(B2);
\draw[line width=2] (B1) arc(90:45:8) -- ++ (-45:2) arc (225:270:5) coordinate(B2);
\draw[line width=6,color=white] (B2) arc(270:315:5) -- ++ (60:11) arc (150:90:4.5);
\draw[line width=2] (B2) arc(270:315:5) -- ++ (60:11) arc (150:90:4.5) coordinate(B3);
\draw[line width=2] (B3) arc(90:35:4.5) -- ++ (-55:2) arc (225:270:4) coordinate (B4);
\draw[line width=6,color=white] (B4) arc(270:330:4) -- ++ (60:16);
\draw[line width=2] (B4) arc(270:330:4) -- ++ (60:5) arc (135:90:4) -- ++ (5,0);
\draw[line width=2] (5,0) coordinate (SW) -- ++ (17,0) arc (-90:-45:8) -- ++ (45:2) arc(135:90:8) coordinate (D);
\draw[line width=2] (D) arc(90:45:8)  -- ++ (-45:2) arc (-135:-90:8) -- ++ (15,0);
\draw[line width=2] (C1) arc (90:135:8) -- ++ (225:2) arc (-45:-90:8);
\draw[line width=6,color=white] (D) arc(90:45:8)  -- ++ (-45:2);
\draw[line width=2] (D) arc(90:45:8)  -- ++ (-45:2);
\draw[line width=6,color=white] (C2) arc(-90:-135:8) -- ++(135:2) arc (45:90:8);
\draw[line width=2] (C2) arc(-90:-135:8) -- ++(135:2) arc (45:90:8);
\draw[line width=6,color=white] (A10) arc(-90:-35:4.5)-- ++ (55:2) arc(-225:-270:4) coordinate (A11);
\draw[line width=2] (A10) arc(-90:-35:4.5)-- ++ (55:2) arc(-225:-270:4) coordinate (A11);
\draw[line width=2] (A0) arc(225:90:5) -- (55,47) coordinate (SW1);
\begin{scope}[shift={(120,0)},xscale=-1]
\draw[line width=2,color=white] (5,40) arc(90:45:5) coordinate (A0);
\draw[line width=2] (A0) -- ++ (-45:2) arc (45:0:6) coordinate (A1);
\draw[line width=2] (A1) arc(0:-45:6) arc (135:180:8) coordinate (A2);
\draw[line width=2] (A2) arc(180:270:8) coordinate(A3);
\draw[line width=2] (A3) arc(270:395:8) coordinate (A4);
\draw[line width=2] (A4)-- ++ (125:10) coordinate (A5);
\draw[line width=2] (A5) arc(45:180:5) coordinate (A6);
\draw[line width=2] (A6) arc(180:270:5) coordinate(A7);
\draw[line width=2] (A7) arc(270:315:5) --++ (45:8) coordinate(A8);
\draw[line width=2] (A8) arc (135:45:5) -- ++ (-60:5) coordinate(A9);
\draw[line width=2] (A9) -- ++ (-60:6) arc(-150:-90:4.5) coordinate (A10);
\draw[line width=2] (A10) arc(-90:-35:4.5)-- ++ (55:2) arc(-225:-270:4) coordinate (A11);
\draw[line width=2] (A11)  arc(90:-90:10) coordinate (C1);
\draw[line width=2] (C1) arc (90:135:8) -- ++ (225:2) arc (-45:-90:8) coordinate (C2);
\draw[line width=2] (C2) arc(-90:-135:8) -- ++(135:2) arc (45:90:8) coordinate (C3);
\draw[line width=2] (C3) -- ++ (-5,0) arc(270:90:6) coordinate (B1);
\draw[line width=2] (B1) arc(90:45:8) -- ++ (-45:2) arc (225:270:5) coordinate(B2);
\draw[line width=2] (B2) arc(270:315:5) -- ++ (60:11) arc (150:90:4.5) coordinate(B3);
\draw[line width=2] (B3) arc(90:35:4.5) -- ++ (-55:2) arc (225:270:4) coordinate (B4);
\draw[line width=2] (B4) arc(270:330:4) -- ++ (60:5) arc (135:90:4) -- ++ (5,0);
\draw[line width=2] (3,0) -- ++ (17,0) arc (-90:-45:8) -- ++ (45:2) arc(135:90:8) coordinate (D);
\draw[line width=2] (D) arc(90:45:8)  -- ++ (-45:2) arc (-135:-90:8) -- ++ (15,0);
\draw[line width=6,color=white] (A0) -- ++ (-45:2) arc (45:0:6) coordinate (A1);
\draw[line width=2] (A0) -- ++ (-45:2) arc (45:0:6) coordinate (A1);
\draw[line width=6,color=white] (A2) arc(180:270:8) coordinate(A3);
\draw[line width=2] (A2) arc(180:270:8) coordinate(A3);
\draw[line width=6,color=white] (A4)-- ++ (125:10) coordinate (A5);
\draw[line width=2] (A4)-- ++ (125:10) coordinate (A5);
\draw[line width=6,color=white] (A6) arc(180:270:5) coordinate(A7);
\draw[line width=2] (A6) arc(180:270:5) coordinate(A7);
\draw[line width=6,color=white] (A10) arc(-90:-35:4.5)-- ++ (55:2) arc(-225:-270:4) coordinate (A11);
\draw[line width=2] (A10) arc(-90:-35:4.5)-- ++ (55:2) arc(-225:-270:4) coordinate (A11);
\draw[line width=6,color=white] (C2) arc(-90:-135:8) -- ++(135:2) arc (45:90:8) coordinate (C3);
\draw[line width=2] (C2) arc(-90:-135:8) -- ++(135:2) arc (45:90:8) coordinate (C3);
\draw[line width=6,color=white] (B1) arc(90:45:8) -- ++ (-45:2) arc (225:270:5) coordinate(B2);
\draw[line width=2] (B1) arc(90:45:8) -- ++ (-45:2) arc (225:270:5) coordinate(B2);
\draw[line width=6,color=white] (B2) arc(270:315:5) -- ++ (60:11) arc (150:90:4.5) coordinate(B3);
\draw[line width=2] (B2) arc(270:315:5) -- ++ (60:11) arc (150:90:4.5) coordinate(B3);
\draw[line width=6,color=white] (B4) arc(270:330:4) -- ++ (60:5) arc (135:90:4) -- ++ (5,0);
\draw[line width=2] (B4) arc(270:330:4) -- ++ (60:5) arc (135:90:4) -- ++ (5,0);
\draw[line width=6,color=white] (D) arc(90:45:8)  -- ++ (-45:2) arc (-135:-90:8) -- ++ (15,0);
\draw[line width=2] (D) arc(90:45:8)  -- ++ (-45:2) arc (-135:-90:8) -- ++ (15,0);
\draw[line width=2] (A0) arc(225:90:5) -- (55,47) coordinate (SE1);
\end{scope}
\draw[line width=2](SW) arc (270:90:27.5) -- (55,55) coordinate (NW1);
\draw[line width=2](115,0) coordinate (SE) arc (-90:90:27.5) -- (65,55) coordinate (NE1);
\draw[line width=2] (SW1) .. controls (60,47) and (60,55) .. (NE1);
\draw[line width=6,color=white] (NW1) .. controls (60,55) and (60,47) .. (SE1);
\draw[line width=2] (NW1) .. controls (60,55) and (60,47) .. (SE1);
\end{tikzpicture}
\end{center}
\caption{
An elegant crab-like mutant version $K'_1$ of the knot $K_1$.
}
\label{F:Knot}
\end{figure}

\section{The Kauffman bracket pair of a tangle}
\label{S:bracket}

In this section we recall the definition of the Kauffman bracket pair of a tangle.
This notion is very powerful to determine the Jones polynomial for a knot 
obtained from an algebraic tangle, which is the case for our knots $K_r$.

Let $T$ be a tangle with $4$ endpoints. 
The Kauffman bracket $\kbr{T}$ of $T$ is a linear combination of two 
formal symbols $\kbr{0}$ and $\kbr{\infty}$ with coefficients in the ring of 
Laurent polynomials $\Lambda=\ZZ[t,t\inv]$.
The bracket $\kbr{T}$ may be computed with the usual rules of the Kauffman
bracket polynomial, namely:

-- $\kbr{\unknot}=1$;

-- $\kbr{\unknot\amalg T}=\delta\kbr{T}$ where $\delta=-t^{-2}-t^2$;

-- $\kbr{\toneinv}=t\inv\kbr{\tz}+t\kbr{\ti}.$

Thus, after removing the crossings and all free loops using the rules 
above, we end up with a unique expression of the form 
$\kbr{T}=f(T)\kbr{0}+g(T)\kbr{\infty}$ where $0$ is the 
tangle $\tz$ and $\infty$ is the tangle $\ti$.
The notation $0$ and $\infty$ comes from the shape of the link obtained when we 
take the $\den$ closure of the corresponding tangles. 

We define the \emph{bracket pair} $\br(T)$ of $T$ as 
\begin{equation}\label{E:Br}
\br(T)=\begin{bmatrix}f(T)\\g(T)\end{bmatrix}\in\Lambda^2.
\end{equation}
For example we compute
 $$
 \kbr{-1}=\kbr{\toneinv}=t\inv\kbr{\tz}+t\kbr{\ti}=t\inv\kbr{0}+t\kbr{\infty},
 $$ 
which implies 
$\br(-1)=\left[\begin{matrix}t\inv\\t\end{matrix}\right]$.
Since by definition of $-T$, we have $f(-T)=f(T)|_{t\leftarrow t\inv}$ and 
$g(-T)=g(T)|_{t\leftarrow t\inv}$ we obtain
$\br(1)=\left[\begin{matrix}t\\t\inv\end{matrix}\right]$. 

Proposition 2.2 of \cite{EKP} gives computation rules for the bracket pair of the
horizontal and vertical sum of tangles together with the bracket of the $\num$ 
and $\den$ closures of tangles. Here they are.

\begin{prop}
\label{P:Calc}
For two tangles $T$ and  $U$, we have:
\vspace{0.3em}

-- $\phantom{i}i)$ $\br(T\!+\!U)
=\left[\begin{matrix} f(T)f(U)\\ f(T) g(U) + g(T) f(U) + \delta g(T) g(U) \end{matrix}\right]$;
\vspace{0.5em}

-- $ii)$ $\br(T\ast U)=\left[\begin{matrix}\delta f(T) f(U) + f(T) g(U) + g(T) f(U) \\ g(T) g(U)\end{matrix}\right]$;
\vspace{0.5em}

-- $iii)$ $\kbr{\num(T)}=\delta f(T)+g(T)$ and $\kbr{\den(T)}=f(T)+\delta g(T)$.

\end{prop}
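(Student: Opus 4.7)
The plan is to exploit the fact that the bracket expansion $\kbr{T} = f(T)\kbr{0} + g(T)\kbr{\infty}$ reduces every computation to the four atomic cases in which the sub-tangles range over $\{0,\infty\}$. For each of $i)$, $ii)$, $iii)$ I would identify these atomic values from the local diagrams and then assemble the general formula by bilinearity.

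For $i)$, tracing strand connectivity in $T+U$ for $T, U \in \{0,\infty\}$ yields $0+0=0$, $0+\infty = \infty+0 = \infty$, and $\infty+\infty$ equals $\infty$ together with a disjoint closed loop, hence contributing a factor $\delta$. Bilinear expansion then gives $f(T+U)=f(T)f(U)$ and $g(T+U)=f(T)g(U)+g(T)f(U)+\delta g(T)g(U)$, exactly matching the matrix in $i)$. The analogous strand-tracing for the vertical sum (equivalently, a $90^{\circ}$ rotation that swaps the roles of $0$ and $\infty$) gives $0\ast 0 = 0$ together with a free loop, $0\ast\infty = \infty\ast 0 = 0$, and $\infty\ast\infty = \infty$, from which formula $ii)$ follows by the same expansion.

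For $iii)$, I would directly compute the four atomic closures: both $\den(0)$ and $\num(\infty)$ consist of a single unknot, while $\den(\infty)$ and $\num(0)$ consist of two disjoint unknots. Hence $\kbr{\den(0)} = \kbr{\num(\infty)} = 1$ and $\kbr{\den(\infty)} = \kbr{\num(0)} = \delta$, and linearity immediately gives $\kbr{\den(T)} = f(T)+\delta g(T)$ and $\kbr{\num(T)} = \delta f(T)+g(T)$.

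The only genuine obstacle is keeping the bookkeeping honest in the atomic cases for $+$ and $\ast$: it is easy to overlook the extra disjoint free loop that appears in $\infty+\infty$ and in $0\ast 0$, which is precisely what produces the $\delta$ coefficients in the off-diagonal positions. Once those four atomic diagrams are drawn carefully, the rest of the argument is a routine bilinear expansion.
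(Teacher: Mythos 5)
Your argument is correct, and all four atomic computations check out: for the horizontal sum the only case producing a free loop is $\infty+\infty$, for the vertical sum it is $0\ast 0$, and the closure values $\kbr{\den(0)}=\kbr{\num(\infty)}=1$, $\kbr{\den(\infty)}=\kbr{\num(0)}=\delta$ are right (one can sanity-check part $i)$ against the paper's value $\br(2)=\left[\begin{smallmatrix}t^2\\-t^{-4}+1\end{smallmatrix}\right]$ computed from $\br(1)=\left[\begin{smallmatrix}t\\t\inv\end{smallmatrix}\right]$). Note, however, that the paper offers no proof at all for this proposition: it is quoted verbatim from Proposition~2.2 of the cited work of Eliahou--Kauffman--Thistlethwaite, so there is no ``paper's approach'' to compare against; your proof is the standard one and is essentially self-contained. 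The only point worth making explicit is the locality fact you use silently when you ``expand by bilinearity'': the bracket of a diagram containing a sub-tangle $T$ depends only on $\kbr{T}$ and is $\Lambda$-linear in it, because the skein and loop-removal rules are local. Granting that (standard) fact, the reduction to the four atomic cases is exactly as you describe, and the formulas follow.
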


A direct computation from the expression of $\br(1)$ gives
$$
\br(2)=\begin{bmatrix}
        t^2\\
        -t^{-4}+1
       \end{bmatrix},\quad
\br(3)=\begin{bmatrix}
        t^3\\
        t^{-7}-t^{-3}+t
       \end{bmatrix}\quad\text{and}\quad
\br(1/2)=\begin{bmatrix}
          1-t^4\\
          t^{-2}
         \end{bmatrix}.
$$
Using these values, we determine the bracket pair of 
$T_{8,21}=(((1/2)+1)\ast2)+(-3)$:
\begin{equation}
\label{E:T821}
\br(T_{8,21})=\begin{bmatrix}
                                          -2\,t^{-6}+2\,t^{-2}-2\,t^2+t^6\\
                                          
-2\,t^{-4}+3\,-4\,t^4+3\,t^8-2\,t^{12}+t^{16}
                                         \end{bmatrix}.
\end{equation}

\begin{nota}
In the sequel, for a tangle $T$ and an integer $m\geq 2$, we will denote 
by $\br_m(T)$ the bracket pair of $T$ modulo $m$. 
\end{nota}

\smallskip

\subsection{The case of tangle $T_{20}$.}

We now analyze the bracket pair of the tangle $T_{20}$ introduced in Definition~\ref{D:Tangles}.

\begin{lem}
\label{L:ModBracket}
The  bracket pair  $\br_2(T_{20})$ is equal to $\left[\begin{smallmatrix}1\\0\end{smallmatrix} 
\right]$. 
Moreover, the leading term of $f(T_{20})$ is $2\,t^{28}$ and that of $g(T_{20})$ is $2\,t^{26}$.
\end{lem}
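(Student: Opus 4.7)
The proof is a direct recursive computation of the bracket pair, working up the defining chain $T_{8,21} \rightsquigarrow T_{10} = T_{8,21}\ast 2 \rightsquigarrow T_{20} = T_{10} + (-T_{10})$, starting from the closed-form expression for $\br(T_{8,21})$ given in (\ref{E:T821}) and the already-computed $\br(2)$. The plan is to do each of the two steps explicitly, extracting both (a) the top and bottom monomials of each coordinate, for the leading-term claim, and (b) the reduction modulo $2$, for the $\br_2$ claim.

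\smallskip

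First I would apply Proposition~\ref{P:Calc}$(ii)$ with $T = T_{8,21}$, $U = 2$ to obtain $\br(T_{10})$. Over $\mathbb{Z}$ this produces a pair of explicit Laurent polynomials; the key observation is that after collecting terms, $f(T_{10})$ has leading monomial $t^{18}$ and trailing monomial $2t^{-10}$, while $g(T_{10}) = g(T_{8,21})\, g(2)$ has leading monomial $t^{16}$ and trailing monomial $2t^{-8}$. Reducing modulo $2$ and using $\br_2(T_{8,21}) = \bigl[\,t^6,\ 1+t^8+t^{16}\,\bigr]^{\top}$ and $\br_2(2) = \bigl[\,t^2,\ 1+t^{-4}\,\bigr]^{\top}$, one finds the striking simplification $f(T_{10}) \equiv t^{18} \pmod 2$, i.e.\ $f(T_{10})$ becomes a single monomial mod $2$ (the other terms in the expansion cancel in pairs). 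At the same time $g(T_{10}) \equiv 1 + t^{-4} + t^4 + t^8 + t^{12} + t^{16} \pmod 2$.

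\smallskip

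Next, $\br(-T_{10})$ is obtained from $\br(T_{10})$ by the substitution $t \mapsto t^{-1}$, as noted just before Proposition~\ref{P:Calc}. Now apply Proposition~\ref{P:Calc}$(i)$:
\[
f(T_{20}) = f(T_{10})\, f(-T_{10}), \qquad
g(T_{20}) = f(T_{10})\, g(-T_{10}) + g(T_{10})\, f(-T_{10}) + \delta\, g(T_{10})\, g(-T_{10}).
\]
For the leading term of $f(T_{20})$: the highest monomial of $f(T_{10})$ is $t^{18}$ and the highest monomial of $f(-T_{10})$ is $2t^{10}$ (coming from the lowest term $2t^{-10}$ of $f(T_{10})$), giving $2t^{28}$. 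For $g(T_{20})$, the three summands contribute, respectively, $t^{18}\cdot 2t^8 = 2t^{26}$, $t^{16}\cdot 2t^{10} = 2t^{26}$, and $(-t^2)\cdot t^{16}\cdot 2t^8 = -2t^{26}$, summing to $2t^{26}$, which is nonzero and hence is the leading term. Modulo $2$, the monomial form of $f(T_{10})$ instantly yields $f(T_{20}) \equiv t^{18}\cdot t^{-18} = 1$; and $g(T_{20}) \pmod 2$ is verified to vanish by direct expansion of the three summands, whose monomials cancel in pairs (the first two summands are palindromes of each other under $t\mapsto t^{-1}$, and the $\delta$-term $(t^{-2}+t^2)\, q(t)q(t^{-1})$ with $q = g(T_{10}) \bmod 2$ provides exactly the complementary odd-indexed monomials needed for total cancellation).

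\smallskip

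The only real obstacle is bookkeeping: one must track a dozen or so Laurent monomials through several multiplications and verify both the leading-term arithmetic and the mod-$2$ cancellations. There is no conceptual difficulty beyond Proposition~\ref{P:Calc} itself, and the computation is short enough to do by hand (or instantly in any CAS). The underlying reason the argument works is the fortuitous fact that $T_{10}$ was engineered so that $f(T_{10})$ reduces to a single monomial mod $2$; this automatically forces $f(T_{20}) \equiv 1 \pmod 2$ via the product formula, and then the mod-$2$ vanishing of $g(T_{20})$ is what remains to check explicitly.
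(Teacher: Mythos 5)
Your proposal is correct and follows essentially the same route as the paper: compute $\br(T_{10})$ from $\br(T_{8,21})$ and $\br(2)$ via Proposition~\ref{P:Calc}$(ii)$, obtain $\br(-T_{10})$ by $t\mapsto t^{-1}$, read off the leading terms of $f(T_{20})$ and $g(T_{20})$ from the three summands in Proposition~\ref{P:Calc}$(i)$ (with the identical cancellation $2t^{26}+2t^{26}-2t^{26}$), and check the mod-$2$ collapse to $\left[\begin{smallmatrix}1\\0\end{smallmatrix}\right]$. The only cosmetic difference is that you reduce mod $2$ earlier in the chain, whereas the paper carries the full integer bracket of $T_{10}$ before reducing; the verification is the same.
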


\begin{proof}
By relation~\eqref{E:T821}, we obtain
$$
\br(T_{10})=\br(T_{8,21}\ast 2)=
\begin{bmatrix}
2\,t^{-10} - 2\,t^{-6} + 2\,t^{-2} - 2\,t^6 + 2\,t^{10} - 2\,t^{14} + 
t^{18}\\
2\,t^{-8} - 5\,t^{-4} + 7 - 7\,t^4 + 5\,t^8 - 3\,t^{12} + t^{16}           
\end{bmatrix}.
$$
Replacing $t$ by $t\inv$, we get
$$
\br(-T_{10})=\begin{bmatrix}
t^{-18} - 2\,t^{-14} + 2\,t^{-10} - 2\,t^{-6} + 2\,t^2 - 2\,t^6 + 
2\,t^{10}\\
t^{-16} - 3\,t^{-12} + 5\,t^{-8} - 7\,t^{-4} + 7 - 5\,t^4 + 2\,t^8
\end{bmatrix}.
$$
The formula for the computation of $\br(T_{10}+(-T_{10}))$ given in $i)$ of
Proposition~\ref{P:Calc} implies that the leading term of $f(T_{20})$ is 
$t^{18}\cdot 2\,t^{10}=2\,t^{28}$ and that of $g(T_{20})$ is
$$t^{18}\cdot 2\,t^{8}+t^{16}\cdot 2\,t^{10}-t^2\cdot t^{16}\cdot 2\,t^8=2\,t^{26}$$
as expected.
Taking coefficient modulo $2$, we obtain
\begin{align*}
\br_2(T_{10})&=
\begin{bmatrix}
t^{18}\\
t^{-4} + 1 + t^4 + t^8 + t^{12} + t^{16}           
\end{bmatrix},
\\
\br_2(-T_{10})&=\begin{bmatrix}
t^{-18}\\
t^{-16} + t^{-12} + t^{-8} + t^{-4} + 1 +t^4
\end{bmatrix},
\end{align*}
and so $\br_2(T_{20})=\br_2(T_{10}+(-T_{10}))=\left[
\begin{smallmatrix}1\\0\end{smallmatrix}\right]$.
\end{proof}

\subsection{The general case of tangles $M_r$.}

We now analyze the bracket pair \eqref{E:Br} of the tangle $M_r$ constructed in Definition~\ref{D:Tangles}.
For convenience, for $r\geq 1$, we denote by $\ell_r\in\ZZ[t,t\inv]$ the leading term of $f(M_r)$.

\begin{prop}
\label{P:M20}
For all $r\geq 1$, we have $\br_{2^r}(M_r)=\left[\begin{smallmatrix}1\\0\end{smallmatrix} 
\right]$. 
Moreover, we have $\ell_r=\left(2\,t^{28}\right)^{2^{r-1}}$ while the leading term  
of $g(M_r)$ is equal to $t^{-2}\ell_r$.
\end{prop}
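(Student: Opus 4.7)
The plan is to prove the proposition by induction on $r$, with the base case $r=1$ being exactly Lemma~\ref{L:ModBracket}. For the inductive step, assume the claim holds for $r-1$ and use the recursive definition $M_r=M_{r-1}+M_{r-1}$ together with the horizontal-sum formula of Proposition~\ref{P:Calc}$(i)$, which gives
\begin{equation*}
\br(M_r)=\begin{bmatrix} f(M_{r-1})^2 \\ 2\,f(M_{r-1})g(M_{r-1})+\delta\,g(M_{r-1})^2 \end{bmatrix}.
\end{equation*}

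For the congruence part, the induction hypothesis lets me write $f(M_{r-1})=1+2^{r-1}a$ and $g(M_{r-1})=2^{r-1}b$ for some $a,b\in\Lambda$. Squaring yields $f(M_r)=1+2^r a+2^{2(r-1)}a^2$, and since $2(r-1)\geq r$ for $r\geq 2$, this is $\equiv 1 \pmod{2^r}$. Similarly, $g(M_r)=2^r f(M_{r-1})b+2^{2(r-1)}\delta b^2 \equiv 0 \pmod{2^r}$. So the mod-$2^r$ assertion follows immediately from the induction hypothesis.

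For the leading-term part, I would track degrees explicitly: by induction, $f(M_{r-1})$ has leading term $\ell_{r-1}=(2t^{28})^{2^{r-2}}$ and $g(M_{r-1})$ has leading term $t^{-2}\ell_{r-1}$, so $\deg f(M_{r-1})=28\cdot 2^{r-2}$ and $\deg g(M_{r-1})=28\cdot 2^{r-2}-2$. Then $f(M_r)=f(M_{r-1})^2$ has leading term $\ell_{r-1}^2=(2t^{28})^{2^{r-1}}=\ell_r$, as desired. For $g(M_r)$, I would compute that both summands $2f(M_{r-1})g(M_{r-1})$ and $\delta\,g(M_{r-1})^2$ have the same top degree $28\cdot 2^{r-1}-2$: the first contributes $2t^{-2}\ell_r$, while the $-t^2$ part of $\delta=-t^{-2}-t^2$ combined with $g(M_{r-1})^2$ contributes $-t^{-2}\ell_r$. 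Summing gives leading term $t^{-2}\ell_r$.

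The only subtle point, and the main thing to be careful about, is the leading term of $g(M_r)$: one must notice that the two natural contributions of top degree partially cancel, $2t^{-2}\ell_r + (-t^{-2}\ell_r) = t^{-2}\ell_r$, and verify that no other term of $\delta\,g(M_{r-1})^2$ reaches this top degree (the $-t^{-2}$ summand of $\delta$ only produces degree $28\cdot 2^{r-1}-6$). Everything else is a routine induction, and the mod-$2^r$ claim in fact needs only that both $2(r-1)\geq r$ and the shape of the hypothesis, so no delicate cancellation occurs there.
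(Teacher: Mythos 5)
Your proof is correct and follows essentially the same route as the paper: induction on $r$ with Lemma~\ref{L:ModBracket} as base case, writing $f(M_{r-1})=1+2^{r-1}P$, $g(M_{r-1})=2^{r-1}Q$ and using $2(r-1)\geq r$ for the congruence, and the same partial cancellation $2t^{-2}\ell_r-t^{-2}\ell_r=t^{-2}\ell_r$ for the leading term of $g(M_r)$. No substantive differences.
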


\begin{proof}
By induction on $r\geq 1$. The case $r=1$ is Lemma~\ref{L:ModBracket}.
Assume now $r\geq 2$.
By the induction hypothesis, we have $f(M_{r-1})\equiv 1 \mod 2^{r-1}$ and 
$g(M_{r-1})\equiv 0 \mod 2^{r-1}$.
Hence, there exist two Laurent polynomials $P$ and $Q$ in $\ZZ[t,t\inv]$ such that the 
relations $f(M_{r-1})=1+2^{r-1}P$ and $g(M_{r-1})=2^{r-1}Q$ hold.
By Proposition~\ref{P:Calc} and formula $\br(M_r)=\br(M_{r-1}+M_{r-1})$ we obtain:
\begin{align*}
\br(M_r)&=\begin{bmatrix}
   f(M_{r-1})^2\\
   2g(M_{r-1})\,f(M_{r-1})+\delta g(M_{r-1})^2
  \end{bmatrix}
  \\
&=\begin{bmatrix}
    1+2^r P+ 2^{2r-2} P^2\\
   2^r Q+2^{2r-1}PQ+\delta 2^{2r-2}Q^2
   \end{bmatrix}.
\end{align*}
As $2r-2\geq r$ holds since $r\geq 2$, we find
$\br_{2^r}(M_r)=\left[\begin{smallmatrix}1\\0\end{smallmatrix} 
\right]$.
Let us now prove the statement about the leading terms.
Since $f(M_r)$ is equal to $f(M_{r-1})^2$, the induction hypothesis
implies that the leading term of $f(M_r)$ is the square of the leading term of 
$f(M_{r-1})$, \ie, the square of $\ell_{r-1}$.
Since $\ell_{r-1}^2$ is equal to $\ell_r$, we have the desired result for the 
leading term of $f(M_r)$.
Denoting by $\lt(P)$ the leading term of a Laurent polynomial $P\in\Lambda$, we have
$$\lt(g(M_r))=\lt\left(2g(M_{r-1})f(M_{r-1})+\delta\,g(M_{r-1})^2\right).$$
By the induction hypothesis, we compute
\begin{align*}
 \lt(g(M_{r-1})\,f(M_{r-1}))&=t^{-2}\ell_{r-1}\cdot\ell_{r-1}=t^{-2}\ell_{r-1}^2=t^{-2}\ell_r\\
 \lt(\delta\,g(M_{r-1}^2))&=-t^{2}\cdot(t^{-2}\ell_{r-1})^2=-t^{-2}\ell_{r-1}^2=-t^{-2}\ell_r
\end{align*}
and so $\lt(g(M_r))=2\left(t^{-2}\ell_r\right)-\left(t^{-2}\ell_r\right)=t^{-2}\ell_r$, as desired.
\end{proof}

\section{On the Jones polynomial of $K_r$}
\label{S:jones}

The \emph{writhe} of an oriented link diagram $D$, denoted by $\wri(D)$, is the sum of the 
signs of the crossings of $D$ following conventions \textbf{a} and 
\textbf{b} of Figure~\ref{F2}.

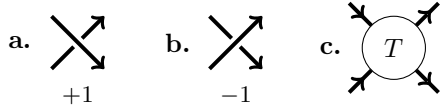
\begin{figure}[h!]

\begin{center}
 \begin{tikzpicture}[x=0.07cm,y=0.07cm]
 \draw(-6,5) node {\textbf{a.}};
  \draw[line width=1.5,->](0,0) -- (10,10);
  \draw[line width=4.5,color=white](0,10) -- (10,0);
  \draw[line width=1.5,->](0,10) -- (10,0);
  \draw(5,-5) node{\small$+1$};
  \begin{scope}[shift={(30,0)}]
  \draw(-6,5) node{\textbf{b.}};
    \draw[line width=1.5,->](0,10) -- (10,0);
  \draw[line width=4.5,color=white](0,0) -- (10,10);
  \draw[line width=1.5,->](0,0) -- (10,10);
  \draw(5,-5) node{\small$-1$};
  \end{scope}
  \begin{scope}[shift={(65,4)}]
   \draw(-12,0) node{\textbf{c.}};
   \draw(0,0)  circle (6) node{$T$};
   \draw[line width=1.5,-<] (45:12) -- (45:8);
   \draw[line width=1.5](45:9) -- (45:6);
   \draw[line width=1.5,->] (135:12) -- (135:8);
   \draw[line width=1.5](135:9) -- (135:6);
   \draw[line width=1.5,->] (225:12) -- (225:8);
   \draw[line width=1.5](225:9) -- (225:6);
   \draw[line width=1.5,-<] (315:12) -- (315:8);
   \draw[line width=1.5](315:9) -- (315:6);
  \end{scope}
 \end{tikzpicture}
\end{center}
\caption{\textbf{a.} Crossing with positive writhe. \textbf{b.} Crossing with 
negative writhe. \textbf{c.} Left-right oriented tangle.}
\label{F2}
\end{figure}

The notion of writhe can be naturally extended  to an oriented tangle $T$.
We say that a tangle $T$ is \emph{left-right orientable} if it can be equipped with 
an orientation as in~\textbf{c} of Figure~\ref{F2}, in which case we denote by $\wri(T)$ 
 the writhe of $T$ with respect to that orientation.

\begin{lem}
\label{L:Writhe}
The tangle $M_r$ is left-right orientable and we have $\wri(M_r)=0$.  
\end{lem}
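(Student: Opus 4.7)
The plan is to proceed by induction on $r$, with the main work concentrated in the base case $r=1$. The inductive step relies on the observation that horizontal sum respects left-right orientability and adds writhes: if $T_1$ and $T_2$ are both equipped with left-right orientations (both strands flowing from left to right), then the joining endpoints between them carry matching orientations, and since no new crossings are created in the sum, $\wri(T_1+T_2)=\wri(T_1)+\wri(T_2)$. Applied to $M_r = M_{r-1}+M_{r-1}$, this yields $\wri(M_r)=2\wri(M_{r-1})$, reducing everything to the case $M_1 = T_{20} = T_{10}+(-T_{10})$.

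For the base case, I would first verify that $T_{10}$ is left-right orientable, that is, that its two strands connect NW to NE and SW to SE respectively. This can be read off directly from Figure~\ref{F:M1}, or traced through the defining iterated sum: $1/2$ has ``$0$-type'' connectivity (NW--SW and NE--SE), and this type is preserved by adding $1$ horizontally; a subsequent vertical sum with $2$ flips it to ``$\infty$-type'' (NW--NE and SW--SE); a horizontal sum with $-3$ (odd, hence of ``X-type'' with NW--SE and SW--NE) produces $T_{8,21}$ of type X; and a final vertical sum with $2$ yields type $\infty$ for $T_{10}$, which is exactly the left-right pattern. Since $-T_{10}$ has the same strand connectivity as $T_{10}$ (only over/under data at crossings is altered), it is left-right orientable as well.

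It then remains to combine writhes. Switching every crossing of $T_{10}$ reverses the sign of each crossing for any fixed orientation, hence $\wri(-T_{10}) = -\wri(T_{10})$. By the initial observation, $T_{20}$ is left-right orientable with $\wri(T_{20}) = \wri(T_{10}) + \wri(-T_{10}) = 0$, and by induction $\wri(M_r) = 0$ for every $r\ge 1$. The main subtlety is the bookkeeping of connectivity types through the vertical sums defining $T_{10}$; once this is done, the rest of the argument is essentially formal, and a direct glance at Figure~\ref{F:M1} dispenses with the combinatorial check altogether.
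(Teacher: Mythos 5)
Your proof is correct and follows essentially the same route as the paper: the paper likewise reduces to $M_1=T_{10}+(-T_{10})$ via additivity of the writhe under horizontal sum of left-right oriented tangles, uses $\wri(-T_{10})=-\wri(T_{10})$, and concludes by induction, simply asserting the left-right orientability of $T_{10}$ as a remark where you verify it by tracking strand connectivity through the algebraic operations. Your connectivity bookkeeping is accurate (a welcome extra detail), though note that your labels ``$0$-type'' and ``$\infty$-type'' are swapped relative to the paper's convention, where the tangle $0$ is the one with two horizontal strands.
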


\begin{proof}
We first remark that the tangle $T_{10}$ is left-right orientable.
 Since $-T_{10}$ is obtained from $T_{10}$ by switching the signs of the 
crossings, the tangle $-T_{10}$ is also left-right orientable and 
$\wri(-T_{10})=-\wri(T_{10})$ holds.
 As the horizontal sum of tangles is compatible with the left-right 
orientation, for any left-right orientable tangles $U$ and~$V$, we have 
$\wri(U+V)=\wri(U)+\wri(V)$.
As $M_1=T_{10}+(-T_{10})$, we have 
$$\wri(M_1)=\wri(T_{10})+\wri(-T_{10})=\wri(T_{10})-\wri(T_{10})=0.$$
Again by the compatibility between the left-right orientation and the 
horizontal sum $+$, a straightforward induction yields
$\wri(M_r)=\wri(M_{r-1})+\wri(M_{r-1})=0+0=0$.
\end{proof}

The \emph{normalized Kauffman bracket polynomial} of a link $L$ depicted by an oriented
diagram~$D$ is $$\chi(L)=(-t^3)^{-\wri(D)}\kbr{D},$$ which is an invariant of 
the link $L$ and so is independent of the choice of the oriented diagram $D$ representing $L$.
The \emph{Jones polynomial} of a link~$L$ is then $$V(L)=\chi(L)|_{t\leftarrow
t^{-1/4}}.$$
The Kauffman bracket of the unknot $\unknot$ is $\kbr{\den(0)}=1$, which gives 
$\chi(\unknot)=1$ and so $V(\unknot)=1$.

Recall that $K_r$ is the knot represented by the diagram $\den(1\ast M_r)$ and that $\ell_r$ is the 
leading term of $f(M_r)$.

\begin{prop}
\label{P:Triv}
 For all $r\geq 1$, the Jones polynomial of $K_r$ is equal to 
$1$ modulo~$2^r$. Moreover the leading term of $\chi(K_r)$ is equal to $\ell_r$.
\end{prop}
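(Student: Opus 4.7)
The plan is to derive an explicit formula for the normalized Kauffman bracket $\chi(K_r)$ in terms of $f(M_r)$ and $g(M_r)$, and then invoke Proposition~\ref{P:M20}. Two ingredients are needed: the unnormalized bracket of the diagram $\den(1\ast M_r)$, and its writhe.

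For the bracket, using the value of $\br(1)$ computed earlier, rule~$ii)$ of Proposition~\ref{P:Calc} yields $\br(1\ast M_r)$, and rule~$iii)$ for the $\den$-closure produces
\begin{equation*}
\kbr{\den(1\ast M_r)} = (\delta t + t^{-1})\,f(M_r) + (t + \delta t^{-1})\,g(M_r).
\end{equation*}
Substituting $\delta=-t^{-2}-t^{2}$ collapses both coefficients, giving the clean identity $\kbr{\den(1\ast M_r)} = -t^{3}f(M_r) - t^{-3}g(M_r)$.

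For the writhe, I orient the unique component of $K_r$ and trace it through the $\den$-closure of $1\ast M_r$. The trace shows that the endpoints of $M_r$ inherit the orientations NW-in, NE-out, SW-in, SE-out, which is precisely the left-right orientation of Figure~\ref{F2}~\textbf{c}. Lemma~\ref{L:Writhe} then ensures that the crossings inside $M_r$ contribute $0$ to $\wri(K_r)$. The only remaining crossing lies in the top copy of tangle~$1$, and a direct sign check in the induced orientation gives contribution $+1$. Hence $\wri(K_r)=1$, and so
\begin{equation*}
\chi(K_r) \;=\; (-t^{3})^{-1}\kbr{\den(1\ast M_r)} \;=\; f(M_r) + t^{-6}g(M_r).
\end{equation*}

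The conclusions are now immediate. By Proposition~\ref{P:M20}, $f(M_r)\equiv 1$ and $g(M_r)\equiv 0 \mod 2^{r}$, so $\chi(K_r)\equiv 1 \mod 2^{r}$; substituting $t\leftarrow t^{-1/4}$ yields $V(K_r)\equiv 1 \mod 2^{r}$. For the leading term: $f(M_r)$ has leading term $\ell_r$, while $t^{-6}g(M_r)$ has leading term $t^{-8}\ell_r$, of strictly smaller degree, so the leading term of $\chi(K_r)$ is $\ell_r$. The main obstacle in this argument is verifying that the orientation induced on $M_r$ by the single component of $K_r$ is the left-right one, which is the prerequisite for applying Lemma~\ref{L:Writhe}; once that is confirmed, everything reduces to the explicit formula $\chi(K_r)=f(M_r)+t^{-6}g(M_r)$.
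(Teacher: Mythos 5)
Your proposal is correct and follows essentially the same route as the paper: compute $\br(1\ast M_r)$ via Proposition~\ref{P:Calc}, simplify the $\den$-closure to $-t^{3}f(M_r)-t^{-3}g(M_r)$, establish $\wri(D_r)=+1$ from Lemma~\ref{L:Writhe} plus the single crossing of the tangle $1$, and conclude $\chi(K_r)=f(M_r)+t^{-6}g(M_r)$ before invoking Proposition~\ref{P:M20}. The only difference is cosmetic: you spell out the degree comparison for the leading term slightly more explicitly than the paper does.
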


\begin{proof}
Let $r$ be an integer $\geq 1$.  We denote by $D_r$ the diagram $\den(1\ast M_r)$.
The left-right orientation of $M_r$ induces the following orientation on~$D_r$:

\begin{center}
  \begin{tikzpicture}[x=0.04cm,y=0.04cm]
 \draw (10,0) coordinate (O) circle (8) node {$M_r$};
 \draw (O) ++ (45:8) coordinate (NE);
 \draw (O) ++ (135:8) coordinate (NW);
 \draw (O) ++ (225:8) coordinate (SW);
 \draw (O) ++ (315:8) coordinate (SE);
 \draw[line width=2,->] (NE) arc (135:90:6) -- ++ (5,0) arc (-90:90:6) --
++ (-8.5,0) coordinate (B1);
 \draw[line width=2,->] (SE) arc (225:270:6) -- ++ (5,0) arc (-90:90:20) 
-- ++ (-8.5,0) coordinate (A1);
\draw (A1) ++ (-6.5,0) coordinate (A2);
\draw (A1) ++ (-6.5,-13) coordinate (A3);
\draw (A1) ++ (-13,-13) coordinate (A4);
\draw[line width=2] (A1) .. controls (A2) and (A3) .. (A4);
\draw (B1) ++ (-6.5,0) coordinate (B2);
\draw (B1) ++ (-6.5,13) coordinate (B3);
\draw (B1) ++ (-13,13) coordinate (B4);
\draw[line width=6,color=white] (B1) .. controls (B2) and (B3) .. (B4);
\draw[line width=2] (B1) .. controls (B2) and (B3) .. (B4);
 \draw[line width=2] (NW) arc (45:90:6) -- ++ (-5,0) arc (270:90:6)  -- (A4);
 \draw[line width=2] (SW) arc (-45:-90:6) -- ++ (-5,0) arc (270:90:20) 
-- (B4);
 \end{tikzpicture}
\end{center}
As the writhe of $M_r$ is $0$ by Lemma~\ref{L:Writhe}, the writhe of $D_r$ with respect to the above orientation is 
$+1$.

Let us now determine the Kauffman bracket of $D_r$.
We have 
$$
\br(1\ast M_r)=\begin{bmatrix}
  \delta\,t\,f(M_r)+t\,g(M_r)+t\inv\,f(M_r)\\
 t\inv\,g(M_r)
  \end{bmatrix}
$$
and so 
\begin{align*}
\kbr{D_r}&= 
\delta\,t\,f(M_r)+t\,g(M_r)+t\inv\,f(M_r)+\delta\,t\inv\,g(M_r)\\
&=(\delta\,t+t\inv)\, f(M_r)+(t+\delta\,t\inv)\, g(M_r)\\
&=-t^3\,f(M_r)-t^{-3}\,g(M_r).
\end{align*}
Hence the normalized Kauffman bracket of $K_r$ is 
\begin{align*}
\chi(K_r)&=(-t^3)^{-\wri(D_r)}\cdot\kbr{D_r}=(-t^3)\inv\,(-t^3\,f(M_r)-t^{-3}\,
g(M_r))\\
&=f(M_r)+t^{-6}g(M_r).
\end{align*}
As $f(M_r)=1$ and $g(M_r)=0$ modulo 
$2^r$ by Proposition~\ref{P:M20}, we obtain $\chi(K_r)=1$ modulo $2^r$ and so $V(K_r)$ is trivial modulo 
$2^r$.
Since the leading term of $g(M_r)$ is equal to $t^{-2}\ell_r$ by Proposition~\ref{P:M20}, the leading term of $\chi(K_r)$ is $\ell_r$.
\end{proof}

We can now state and prove our main result.

\begin{thrm}
 For all $r\geq 1$, there exist infinitely many pairwise distinct knots with trivial Jones polynomial
 modulo $2^r$.
\end{thrm}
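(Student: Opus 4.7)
The plan is to exhibit the infinite family as $\{K_s \mid s \geq r\}$ and to distinguish them via the leading term data already established in Proposition~\ref{P:Triv}.

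First I would observe that, for any $s \geq r$, Proposition~\ref{P:Triv} gives $V(K_s) \equiv 1 \pmod{2^s}$, and since $2^r \mid 2^s$, this implies $V(K_s) \equiv 1 \pmod{2^r}$. Hence every knot in the tail $K_r, K_{r+1}, K_{r+2}, \ldots$ has Jones polynomial trivial modulo $2^r$, so it suffices to prove that this family consists of pairwise distinct nontrivial knots.

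Next I would show pairwise distinctness by extracting an invariant that changes with $s$. By Proposition~\ref{P:Triv}, the leading term of $\chi(K_s)$ equals $\ell_s = (2\,t^{28})^{2^{s-1}} = 2^{2^{s-1}}\, t^{28 \cdot 2^{s-1}}$. Under the substitution $t \leftarrow t^{-1/4}$ defining $V$, this term becomes $2^{2^{s-1}}\, t^{-7 \cdot 2^{s-1}}$, so the minimal degree appearing in $V(K_s)$ is exactly $-7 \cdot 2^{s-1}$. Since $s \mapsto -7 \cdot 2^{s-1}$ is strictly decreasing, the Jones polynomials $V(K_s)$ for $s \geq r$ are pairwise distinct, and in particular the knots $K_s$ themselves are pairwise distinct. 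The same observation settles nontriviality: for each $s \geq 1$ the polynomial $\chi(K_s)$ has a nontrivial term of positive degree $28 \cdot 2^{s-1}$, so $V(K_s) \neq 1 = V(\unknot)$.

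There is no real obstacle here beyond assembling the pieces already proved: Proposition~\ref{P:M20} controls $f(M_r), g(M_r)$ both modulo $2^r$ and at the leading term, and Proposition~\ref{P:Triv} transfers both pieces of information to $\chi(K_r)$ via the writhe normalization. The only point requiring a small sanity check is that the ``leading term'' of $\chi(K_r) = f(M_r) + t^{-6} g(M_r)$ is indeed $\ell_r$ and not cancelled or dominated by $t^{-6} g(M_r)$; this is fine because $g(M_r)$ has leading term $t^{-2}\ell_r$, so $t^{-6} g(M_r)$ contributes its top term in degree $28 \cdot 2^{r-1} - 8$, strictly below the degree $28 \cdot 2^{r-1}$ of $\ell_r$. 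With this verified, the theorem follows immediately.
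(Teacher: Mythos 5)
Your proposal is correct and follows essentially the same route as the paper: take the tail $\{K_s : s\ge r\}$, note that triviality mod $2^s$ implies triviality mod $2^r$, and separate the knots using the leading term $\ell_s=(2\,t^{28})^{2^{s-1}}$ of $\chi(K_s)$ supplied by Propositions~\ref{P:Triv} and~\ref{P:M20}. Your extra verifications (translating the leading term into the exact minimal degree $-7\cdot 2^{s-1}$ of $V(K_s)$, and the non-cancellation of $\ell_r$ against $t^{-6}g(M_r)$, which is already part of the proof of Proposition~\ref{P:Triv}) are sound but not needed beyond what the paper records.
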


\begin{proof}
Let $r\geq 1$ be an integer.
The knots $K_i$ with $i\geq r$ satisfy the statement.
Indeed, by Proposition~\ref{P:Triv}, for all $i\geq r$ the Jones polynomial of $K_i$ is trivial
modulo~$2^i$ and thus modulo $2^r$.
Since for $j\geq 1$, the leading term of~$\chi(K_j)$ is 
$$\ell_j=\left(2\,t^{28}\right)^{2^{j-1}}$$ 
by Proposition~\ref{P:M20}, the map
$j\mapsto \chi(K_j)$ is injective.
In particular the knots $K_i$ for $i\geq r$ have distinct Jones polynomials and so they are pairwise distinct.
\end{proof}

\section{Concluding remarks}
We have exhibited a 20-crossing tangle $T_{20}$ whose Kauffman bracket polynomial pair is trivial mod $2$, i.e. congruent to $\left[\begin{smallmatrix}1\\0\end{smallmatrix} 
\right]$ mod $2$. That tangle allowed us to construct, for any $r \ge 1$, a nontrivial knot $K_r$ whose Jones polynomial is congruent to 1 mod $2^r$.

\smallskip

Having thus solved Problem~\ref{Prob1} for $m=2^r$ and knowing solutions for $m=3$ from the Tables,
what about the existence of solutions for the next moduli, such as $m=5$, $6$ or $7$ for instance? 
More ambitiously perhaps, given an integer $m \ge 3$, does there exist a tangle, analogous to $T_{20}$,
whose Kauffman bracket polynomial pair would be trivial mod $m$? 
If yes, what should be the expected minimal number of crossings as a function of $m$?

\smallskip
Even more intriguing: does there exist a tangle whose Kauffman bracket polynomial pair is trivial \emph{over $\mathbb{Z}$}? The existence of such a tangle would probably imply the existence of a nontrivial knot with trivial Jones polynomial.

\smallskip
\bigskip
\noindent
\textbf{Acknowledgments.} The authors wish to thank L.~H.~Kauffman, D.~Rolfsen and M.~B.~Thistlethwaite for 
interesting discussions related to this work. They also wish to thank the anonymous referee for his very sharp and constructive comments.

\bibliographystyle{plain}
\bibliography{biblio}
\bigskip

{\small
\noindent\textbf{Authors addresses:}

\noindent
Shalom Eliahou, Jean Fromentin\textsuperscript{a,b}

\noindent
\textsuperscript{a}Univ. Littoral C\^ote d'Opale, EA 2597 - LMPA - Laboratoire de Math\'ematiques Pures et Appliqu\'ees Joseph Liouville, F-62228 Calais, France\\
\textsuperscript{b}CNRS, FR 2956, France\\
e-mail: \tt{eliahou@lmpa.univ-littoral.fr, fromentin@math.cnrs.fr}
}
\end{document}